\documentclass[12pt]{article}
\usepackage{graphicx} 
\usepackage{subcaption}
\usepackage{float} 
\usepackage{amssymb}
\usepackage{amsmath}
\usepackage{enumitem} 
\usepackage{mathtools}
\usepackage[thmmarks,amsmath]{ntheorem} 
\usepackage{lineno}
\usepackage{xcolor}
\definecolor{darkblue}{RGB}{0, 0, 100}
\definecolor{darkgreen}{RGB}{0, 100, 0}

\newcommand{\mad}{\operatorname{mad}}
\newcommand{\Prob}{\mathbb P}
\newcommand{\Ex}{\mathbb E}

\usepackage[numbers]{natbib}  
\usepackage{bm} 
\usepackage{extarrows}
\usepackage{mathrsfs} 

\usepackage{lipsum}


\usepackage{booktabs} 
{
    \theoremstyle{nonumberplain}
    \theoremheaderfont{\bfseries}
    \theorembodyfont{\normalfont}
    \theoremsymbol{\mbox{$\Box$}}
    \newtheorem{proof}{Proof}
}
\makeatletter
\newcounter{mycounter}[section]

{
    \theoremstyle{plain}
    \theoremheaderfont{\bfseries}
    \theorembodyfont{\normalfont}
    \newtheorem{theorem}[mycounter]{Theorem}
    
    \newtheorem{lemma}[mycounter]{Lemma}

    \newtheorem{corollary}[mycounter]{Corollary}
    
    \newtheorem{claim}[mycounter]{Claim}

    \newtheorem{problem}[mycounter]{Problem}
    \newtheorem{conjecture}[mycounter]{Conjecture}
}

{
    \theoremheaderfont{\bfseries}
    \theorembodyfont{\normalfont}
    
}

\makeatother

\newcommand{\RNum}[1]{\uppercase\expandafter{\romannumeral #1\relax}}
\allowdisplaybreaks[4]

\graphicspath{{figure/}}                                 
\usepackage[a4paper]{geometry}                        
\begin{document}
\title{A linear bound for a connectivity partition in graphs
\thanks{The author's work is supported by National Natural Science Foundation of China (No.12571373), NSFC (No.12461064)}}

\author{Jia Zhou\textsuperscript{1},  Jin Yan\textsuperscript{2}, Yunshu Gao\textsuperscript{1}\footnote{Corresponding author. E-mail address: gysh2004@gmail.com.} %
\unskip\\[2mm]
\textsuperscript{1} School of Mathematics and Statistics, Ningxia University,\\ Yinchuan 750021, China\\
\textsuperscript{2} School of Mathematics, Shandong University, Jinan 250100, China}

\date{}
\maketitle

\begin{abstract}
K\"uhn and Osthus proved that, for every positive integer $\ell$, every $2^{16}\ell^2$-connected graph $G$ has a partition $V(G)=S\cup T$ such that $G[S]$ and $G[T]$ are $\ell$-connected and $d_T(v)\ge \ell$ for every $v\in S$. And they asked whether the quadratic bound can be replaced by a linear bound. In this paper, we answer this question in the affirmative by proving that every $641\ell$-connected graph $G$ has a partition $V(G)=S\cup T$ such that $G[S]$ and $G[T]$ are $\ell$-connected and $d_T(v)\ge \ell$ for every $v\in S$. The proof integrates the Moser–Tardos resampling algorithm.
\end{abstract}

\noindent \textbf{Keyword:} graph partition, vertex-connectivity,  Moser--Tardos algorithm

\noindent \textbf{AMS subject classifications.} 05C40, 05C70, 05D40

\section{Introduction}
Partition problems with degree conditions have been studied extensively, which consider partitioning the vertex set of a given graph into a prescribed number of parts while inheriting certain degree conditions within or between these parts. One of the foundational such results is a classical theorem by Lov\'asz \cite{Lovasz1966} from 1966, stating that for all integers $s, t \in \mathbb{N}$, every graph $G$ with maximum degree $\Delta(G) \le s+t+1$ admits a partition $V(G)=A\cup B$ such that $\Delta(G[A]) \le s$ and $\Delta(G[B]) \le t$, where the maximum degree is the largest number of edges incident to any vertex.

In contrast, define the minimum degree of $G$ is the smallest number of edges incident to any vertex in $G$, write $\delta(G)$, and define $g(s,t)$ as the smallest integer such that every graph $G$ with $\delta(G)\ge g(s,t)$ admits a vertex partition $V(G)=A\cup B$ satisfying $\delta(G[A])\ge s$ and $\delta(G[B])\ge t$. Thomassen \cite{Thomassen1983} showed in 1983 that $g(s,t)\in\mathbb{N}$ exists for all integers $s,t\ge 1$. In 1996, Stiebitz determined the exact threshold, proving that $g(s,t)=s+t+1$; this bound is tight, as witnessed by the complete graph $K_{s+t+1}$.
 Kühn and Osthus  \cite{KuhnOsthus2003} first proposed to simultaneously bound the quantities $\delta(G[A])$, $\delta(G[B])$ and $d_B(v)$ for all $v\in A$. They proved that for any positive integer $\ell$,  every graph $G$ with $\delta(G)\ge {h(\ell)}${, where $h(\ell)=O(\ell^2)$,} admits a partition $V(G)=A{\cup}B$ satisfying $\delta(G[A])\ge \ell$, $\delta(G[B])\ge \ell$, and $d_B(v)\ge \ell$ for all $v\in A$. Furthermore, Ma and Wu improved the quadratic bound $h(\ell)$ to a linear bound in 2024.

\begin{theorem}\cite{MaWu2024}
There exists an absolute constant $C>0$ such that {for every positive integer $k$ and} for every graph $G$ with $\delta(G)\ge Ck$, there exists a 2-partition $V(G)=S{\cup}T$ satisfying $
\delta(G[S])\ge k$, $ \delta(G[T])\ge k$, and $d_T(v)\ge k$ for all $v\in S$.
\end{theorem}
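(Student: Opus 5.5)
The plan is to reduce first to a graph in which all degrees are comparable to $Ck$, then find the partition by a random split that is corrected locally, and finally clean it up greedily.

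\textbf{Sparsification.} Take a minimal spanning subgraph $H\subseteq G$ with $\delta(H)\ge Ck$. By minimality every edge of $H$ has an endpoint of degree exactly $Ck$. So the vertices $L$ of degree exactly $Ck$ form a vertex cover, and $V(H)\setminus L$ is independent with all its neighbours in $L$. It suffices to prove the statement for $H$, since passing back to $G$ only adds edges and can only help the three conditions. High-degree vertices are a nuisance, because their neighbourhoods overlap the neighbourhoods of many other vertices. I would deal with this by working mainly with the low-degree vertices. For a vertex $v$ of large degree, I only require that some fixed set of $Ck$ of its neighbours behaves well.

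\textbf{Random split with local correction.} Put each vertex into $T$ independently with probability $p$, and into $S$ otherwise; I would take $p=1/2$. For each vertex $v$, using its chosen $Ck$ neighbours $N'(v)$, let $A_v$ be the bad event that $|N'(v)\cap S|<3k$ or $|N'(v)\cap T|<3k$. By Chernoff, $\Prob(A_v)\le e^{-cCk}$. The event $A_v$ is determined by the variables on $N'(v)$. The dependency degree is not obviously bounded, because a vertex of $L$ may lie in $N'(u)$ for many high-degree $u$. I would repair this by orienting or splitting the high-degree stars so that each vertex lies in $O(Ck)$ of the sets $N'(u)$; this is possible because $|N'(u)|=Ck$ and $L$ dominates. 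With that in place, each event shares variables with at most $(Ck)^2$ others. The Lov\'asz Local Lemma, in the Moser–Tardos form, then gives an assignment avoiding every $A_v$ as soon as $e\,e^{-cCk}(Ck)^2<1$, which holds for $C$ large uniformly in $k\ge1$.

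\textbf{Greedy cleanup.} We now have a partition in which every vertex has at least $3k$ neighbours on each side. This already gives $\delta(G[S])\ge k$, $\delta(G[T])\ge k$ and $d_T(v)\ge k$ for $v\in S$, so in this form the conclusion follows. If the local lemma can only be applied to a weaker, more robust event, I would instead finish with a cleanup. Repeatedly move a vertex of $S$ with fewer than $k$ neighbours in $S$ into $T$; moving vertices into $T$ never decreases $d_T$ of vertices remaining in $S$. Then delete from $T$ the vertices with $d_T<k$, returning them to $S$ only if they have $\ge 2k$ neighbours in the final $S$. The $3k$ slack is what makes this terminate correctly.

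\textbf{Main obstacle.} I expect the hardest step to be the dependency control for unbounded degrees. If orienting the stars fails, I would first try restricting the events to $L$ alone and handling $V\setminus L$ afterwards. Since $V\setminus L$ is independent and all its neighbours lie in $L$, each such vertex can be placed into whichever side works at the very end.
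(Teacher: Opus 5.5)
The paper gives no proof of this result (it is quoted from Ma and Wu), so I am judging your plan on its own. It breaks at exactly the step you flag as the main obstacle.

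First, you have the obstruction the wrong way round. A vertex $w\in L$ lies in $N'(u)$ only if $u\in N_H(w)$, so it is automatically in at most $Ck$ of these sets. The problem is a vertex $w\notin L$: it lies in $N'(u)$ for every $u\in N_H(w)\subseteq L$. For $u\in L$ you have no freedom, because $d_H(u)=Ck$ forces $N'(u)=N_H(u)$.

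More seriously, no reorientation or splitting can rescue the Local Lemma step, because its target need not exist. That target is a partition in which every vertex has at least $3k$ neighbours on each side. Here is a counterexample:
\begin{itemize}
\item Let $m=Ck$ and let $W$ be a set of $2m-1$ vertices.
\item For every $m$-subset $X\subseteq W$, add a vertex $u_X$ with $N(u_X)=X$.
\item This graph has minimum degree $m$ and is already edge-minimal. So $H=G$, the $u_X$ lie in $L$, and $W$ is the independent set of high-degree vertices.
\item In every $2$-colouring, one colour class contains $m$ vertices of $W$, hence some $X$. Then $u_X$ has no neighbour of the other colour.
\item Shrinking $N'(u_X)$ to a subset of $X$ changes nothing.
\end{itemize}
So the random split plus LLL cannot work once maximum degrees may be exponential in $Ck$. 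That is precisely the regime that makes the theorem nontrivial.

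Your fallbacks do not repair this. In the example, every neighbour of every vertex of $L$ lies in $W=V\setminus L$, and each $w\in W$ enters $\binom{2m-2}{m-1}$ constraints. So there is no individual ``side that works'' for $w$ at the end. The cleanup is also inconsistent as written:
\begin{itemize}
\item A vertex expelled from $T$ because $d_T<k$ cannot go to $S$, where $d_T\ge k$ is required.
\item Nothing stops the first phase from emptying $S$. Yet a nonempty $S$ is the entire content of the statement, since $S=\emptyset$, $T=V(G)$ is trivial.
\end{itemize}

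The missing idea is to exploit the asymmetry of the conditions: vertices of $T$ need only neighbours in $T$. You should aim for a lopsided partition rather than a balanced one. In the example, the following works: take $k+1\le |W\cap S|\le m-k$, and put $u_X$ into $S$ exactly when $|X\cap S|=k$. Then every $u_X$ automatically has at least $k$ neighbours in $T$.

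To get genuinely bounded dependencies, compare the mechanism of Lemma~\ref{lem:transfer}:
\begin{itemize}
\item The randomness is confined to a part of bounded maximum average degree whose complement already has the required property; there this is supplied by Lemma~\ref{lem:KO}.
\item Each deficient vertex draws support only from neighbours earlier in a degeneracy ordering. Hence each variable lies in at most as many events as the degeneracy.
\item $S$ is then extracted as a dense core of what is left.
\end{itemize}
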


Partitioning under connectivity constitutes an important extension of graph partition theory. Győri and Lovász first established fundamental theorems for connected partitions \cite{Gyori1978, Lovasz1977}: if $G$ is a $k$-connected graph, given distinct vertices $v_1,v_2,\dots,v_k$ and a positive integer sequence $n_1,n_2,\dots,n_k$ with $\sum_{i=1}^k n_i=|V(G)|$, there exists a vertex partition $V(G)=V_1{\cup}\dots{\cup}V_k$ such that $v_i\in V_i$, $|V_i|=n_i$, and each induced subgraph $G[V_i]$ is connected. Define $f(s,t)$ {to} be the minimum integer such that every $f(s,t)$-connected graph admits a 2-partition whose two parts induce $s$-connected and $t$-connected subgraphs, respectively. Furthermore, in 1983, Thomassen \cite{Thomassen1983} proved that for all positive integers $s,t$, there exists a finite integer $f(s,t)$. He also proposed the classical connected partition conjecture.

\begin{conjecture}\label{Thomassen1983}
$f(s,t)=s+t+1$.
\end{conjecture}

This conjecture remains open in the general case. However, Hajnal derived the explicit linear upper bound $f(s,t)\le 4s+4t-13$ for $s,t\ge 3$ \cite{Hajnal1983}. In 2022, Liu and Xu \cite{LiuXu2022} {improved the asymptotic coefficient of this bound by proving} $f(s,t)\le \left\lceil\frac{19(s-1)}{6}\right\rceil+\left\lceil\frac{19(t-1)}{6}\right\rceil+1,
$ when $\min\{s,t\}\ge 3$.  K\"uhn and Osthus~\cite{KuhnOsthus2003} considered the following strengthening of the connectivity problem.
\begin{theorem}\cite{KuhnOsthus2003}
There exists $k(\ell)=O(\ell^2)$ such that every $k(\ell)$-connected graph $G$ admits a partition $V(G)=A{\cup}B$ satisfying $\kappa(G[A])\ge \ell$, $\kappa(G[B])\ge \ell$, and $d_B(v)\ge \ell$ for all $v\in A$.
\end{theorem}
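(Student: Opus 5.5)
We prove the K\"uhn--Osthus statement with $k(\ell)=C\ell^2$ for a suitable absolute constant $C$. The strategy is to first reduce to a highly connected subgraph with controlled structure, then split it in two.

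\textbf{Step 1: a dense, well-connected core.} By Mader's theorem, every graph of average degree at least $4m$ contains an $m$-connected subgraph. We apply this inside $G$ to locate structures of high connectivity whose size we can control. More precisely, we look for a subgraph $H\subseteq G$ that is $\Theta(\ell^2)$-connected but has not much larger minimum degree. This lets random-sampling estimates work with degrees of order $\ell^2$ rather than arbitrarily large ones.

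\textbf{Step 2: random splitting.} Colour each vertex independently red or blue with probability $1/2$, giving sets $A_0$ and $B_0$. Fix a vertex $v$ whose degree into $H$ is $d\ge c\ell^2$. Chernoff bounds show that $v$ has at least $d/3$ neighbours of each colour, except with probability $e^{-\Omega(\ell^2)}$. A union bound over all vertices is not available, so we use the Lov\'asz Local Lemma instead. Its dependencies are controlled only if degrees are bounded, which is why we restrict to a core of bounded maximum degree or sparsify by keeping random subsets of neighbourhoods. The outcome is a partition in which every vertex has $\Omega(\ell^2)$ neighbours on each side.

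\textbf{Step 3: upgrading degrees to connectivity (the main obstacle).} High minimum degree in $G[A_0]$ does not imply $\ell$-connectivity. To obtain connectivity we use linkage-type tools: a $2\ell$-linked, or sufficiently connected, set $X$ of bounded size is fixed in advance and split evenly between the two sides. Then every vertex in $A$ should reach $X\cap A$ by $\ell$ disjoint paths inside $A$. We would try to achieve this by choosing $A$ and $B$ to contain many disjoint connectors, given by $\ell$ disjoint trees or paths. Such connectors come from Menger's theorem in $G$, since $G$ is $k(\ell)$-connected, and we randomly distribute them so that each side retains enough of them.

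\textbf{Step 4: repair.} Finally, vertices of $A$ with $d_B(v)<\ell$ are moved to $B$. We argue that this creates no new low-degree vertices and destroys no connectivity, because each move only adds vertices of high degree into $B$. The quadratic loss arises in Step 3, where we need roughly $\ell$ connectors each of size about $\ell$. Controlling the interaction between these connectors and the random split is the hardest part, and this is where the $\ell^2$ dependence is incurred.
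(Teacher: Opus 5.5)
Your proposal has genuine gaps, and they sit exactly where the difficulty lies.

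\textbf{Step~3 is not carried out.} You yourself call it the main obstacle, but it is only a plan (``we would try''). Minimum degree $\Omega(\ell^2)$ in $G[A_0]$ gives no $\ell$-connectivity. The Menger connectors belonging to different vertices overlap, so they cannot be assigned to sides as units. Nothing in the proposal ensures that every vertex of $A$ keeps $\ell$ disjoint paths to $X\cap A$ \emph{inside} $A$.

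\textbf{Step~4 is false as stated.} A vertex $v$ that you move has $d_B(v)<\ell$ by definition, so it is not ``of high degree into $B$''. Moved on its own, it has degree less than $\ell$ in $G[B\cup\{v\}]$, which is therefore not $\ell$-connected. Deleting vertices from $A$ can likewise destroy the $\ell$-connectivity of $G[A]$.

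\textbf{Step~2 lacks what makes a local lemma applicable.} Keeping a bounded subset of each neighbourhood bounds the number of variables per event, but not the number of events a single vertex influences. Mader's theorem gives no core of bounded maximum degree. You also never say where the vertices outside the core $H$ go, although the partition must be of all of $V(G)$.

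The missing idea, used in the paper's proof of the stronger Theorem~\ref{thm:main} (so $k(\ell)=641\ell$ already suffices), is to reverse your order and never ask a random half to be $\ell$-connected. The argument runs as follows.
\begin{enumerate}[label=\textup{(\arabic*)}]
\item Lemma~\ref{lem:KO} with $a=16\ell$ gives $V(G)=S_0\cup T_0$ with $G[T_0]$ $\ell$-connected, $d(G[S_0])\ge16\ell$ and $\mad(G[S_0])<512\ell$. Hence $S_0$ has a degeneracy ordering in which every vertex has at most $512\ell$ later neighbours.
\item Using $\delta(G)\ge641\ell$, a bad vertex (fewer than $\ell$ neighbours in $T_0$) has at least $128\ell$ \emph{earlier} neighbours. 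Each vertex is an earlier neighbour of at most $512\ell$ vertices, and this is what bounds the dependencies.
\item A random set $M\subseteq S_0$ (each vertex with probability $1/4$, Moser--Tardos) gives every bad vertex $\ell$ earlier neighbours in $M$, while $G[S_0\setminus M]$ keeps average degree $>4\ell$ (Lemma~\ref{lem:transfer}).
\item Adding $M$ to $T_0$ in the ordering keeps it $\ell$-connected by Lemma~\ref{lem:addvertex}. Afterwards every vertex of $S_0\setminus M$ has $\ell$ neighbours in $T_0\cup M$.
\item Only now is the connectivity of $A$ produced, by Mader's theorem: take $A=V(H)$ for an $\ell$-connected $H\subseteq G[S_0\setminus M]$.
\item All remaining vertices go to $B$. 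This is harmless precisely because each of them already has $\ell$ neighbours in the $\ell$-connected side, which is the property your Step~4 lacks.
\end{enumerate}
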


Indeed, K\"uhn and Osthus remarked that it would be interesting to know whether the quadratic bounds on $k$ in the above theorems can be replaced by linear ones.
\begin{problem}\cite{KuhnOsthus2003}
Does there exist $k(\ell)=O(\ell)$ such that every $k(\ell)$-connected graph $G$ admits a partition $V(G)=A{\cup}B$ satisfying $\kappa(G[A])\ge \ell$, $\kappa(G[B])\ge \ell$, and $d_B(v)\ge \ell$ for all $v\in A$?
\end{problem}

In this paper, we answer this question in the affirmative by proving the following main theorem.

\begin{theorem}\label{thm:main}
For every positive integer $\ell$, every $641\ell$-connected graph $G$ has a 2-partition $V(G)=S\cup T$ such that $G[S]$ and $G[T]$ are $\ell$-connected and $d_T(v)\ge \ell$ for every $v\in S ${.}
\end{theorem}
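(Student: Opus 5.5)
We will follow the K\"uhn--Osthus framework, but replace their quadratic greedy step by a random partition controlled through the Moser--Tardos algorithm. The connectivity of the two sides will come from the classical Mader-type fact that every graph of large average degree contains a highly connected subgraph (Mader: average degree at least $4\ell$ forces an $\ell$-connected subgraph). The difficulty is that a random half of $G$ need not be connected at all, so we first plant two disjoint highly connected ``cores'' and then grow them.

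\textbf{Step 1 (cores).} Since $\delta(G)\ge 641\ell$, we apply Mader's theorem repeatedly to find a subgraph $H_1$ with $\kappa(H_1)\ge c_1\ell$, and then, after deleting it, a disjoint subgraph $H_2$ with $\kappa(H_2)\ge c_1\ell$. Where deleting $H_1$ destroys the degree condition, we instead take a maximal family of such highly connected pieces and use the $641\ell$-connectivity of $G$ to see that the remaining vertices still send many edges to the pieces. The cores give us initial sets $S_0\supseteq V(H_1)$ and $T_0\supseteq V(H_2)$.

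\textbf{Step 2 (random assignment via Moser--Tardos).} Each remaining vertex $v$ is put into $S$ with probability $p$ and into $T$ otherwise, independently. For each vertex $v$ we define a bad event $A_v$. The event says that $v$ fails to have at least $\ell$ neighbours in its own side lying in, or linked to, that side's core, or, if $v\in S$, that $v$ fails to have $\ell$ neighbours in $T$. Since $d(v)\ge 641\ell$, Chernoff bounds give $\Prob(A_v)\le e^{-c\ell}$. However, $A_v$ depends on every event $A_u$ with $u$ at distance at most $2$, so the dependency degree is unbounded in terms of $\ell$, and the plain symmetric Local Lemma fails. We therefore restrict each vertex to a random or greedy subset of exactly $m=\Theta(\ell)$ ``designated'' neighbours, chosen so that every vertex is designated by few others. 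Such an orientation exists because the average degree argument lets us take a subgraph of bounded in-degree. With this restriction the dependency degree is $O(\ell^2)$ while $\Prob(A_v)=e^{-\Omega(\ell)}$. For small $\ell$ this comparison fails, so we use the Moser--Tardos resampling analysis, or its entropy-compression and witness-tree form, to bound the expected number of resamplings directly. Optimising the constants here should yield $641$.

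\textbf{Step 3 (from degrees to connectivity).} Once no bad event occurs, we show that $G[S]$ is $\ell$-connected. Take a separator $X\subseteq S$ with $|X|<\ell$. Every vertex of $S\setminus X$ keeps at least one designated neighbour chain leading into $H_1-X$, and $H_1-X$ is connected because $\kappa(H_1)\ge c_1\ell>\ell$. Hence everything is connected to the core and no separation exists. The same argument applies to $T$, and $d_T(v)\ge\ell$ for $v\in S$ holds directly because the event $A_v$ was avoided.

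The main obstacle is Step 2 combined with Step 3. Guaranteeing ``paths to the core'' rather than mere degrees is not a local event. We intend to handle this by layering: we use BFS distance from the cores in $G$ and require each vertex to have $\ell$ designated neighbours in its own side among vertices in the previous layer. This makes the events local, and the cores need to be close to every vertex, which connectivity of $G$ supplies via Menger paths.
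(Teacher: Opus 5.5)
There is a genuine gap, and it is in Steps~2--3, where you yourself place the difficulty: the designated-neighbour scheme cannot be set up in an arbitrary $641\ell$-connected graph.

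\emph{Bounded load fails.} A choice of $m=\Theta(\ell)$ designated neighbours per vertex, with every vertex designated only $O(\ell)$ times, need not exist. In $K_{641\ell,N}$ with $N$ large, every vertex of the big side has all its neighbours in the small side. So some small-side vertex is designated at least $Nm/(641\ell)$ times, and the dependency degree of your events is unbounded. Bounded load comes for free in a graph of bounded maximum average degree, via a degeneracy order, but you never arrange this.

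\emph{BFS layers do not localise the events.} A vertex at distance $i$ from a core is guaranteed only \emph{one} neighbour at distance $i-1$; all its other neighbours may lie in layers $i$ and $i+1$. The bad event ``fewer than $\ell$ same-side neighbours in the previous layer'' can therefore have probability $1$. Since the side of $v$ is random, you would also need this for the layerings from $H_1$ and from $H_2$ simultaneously. The Moser--Tardos witness-tree analysis is proved under the same local-lemma criterion, so it cannot rescue parameters for which that criterion fails.

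\emph{Planting the core in $S$ breaks the cross-degree condition.} Fixing $V(H_1)\subseteq S$ in advance clashes with $d_T(v)\ge\ell$ for $v\in V(H_1)$. A core vertex may have fewer than $\ell$ neighbours outside $V(H_1)$, for instance if $H_1$ is a large clique of $G$ some of whose vertices have all their neighbours inside it. No random assignment of the remaining vertices repairs this.

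\emph{Step~1 is unsupported.} Deleting a Mader subgraph can leave a graph of tiny average degree; in $K_n$ it can leave nothing. The ``maximal family'' remark does not produce a second disjoint core.

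The missing idea is to treat the two sides asymmetrically. The paper applies the K\"uhn--Osthus lemma (Lemma~\ref{lem:KO} with $a=16\ell$) to get $V(G)=S_0\cup T_0$ with $G[T_0]$ $16\ell$-connected, $d(G[S_0])\ge16\ell$ and $\mad(G[S_0])<512\ell$.

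The bound on $\mad$ supplies the bounded load. In a degeneracy order of $G[S_0]$ each vertex has at most $512\ell$ later neighbours. So a vertex with $d_{T_0}(v)<\ell$ has at least $128\ell+1$ earlier neighbours, and designating $128\ell$ of them puts each variable in at most $512\ell$ events.

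The same order replaces your BFS layers. Add the selected set $M$ to $T_0$ in that order; each new vertex already has $\ell$ neighbours in the current set. Lemma~\ref{lem:addvertex} then keeps $G[T_0\cup M]$ $\ell$-connected, with no non-local ``path to the core'' event.

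The side $S$ is not planted but extracted at the end:
\begin{itemize}
\item The marginal bound $\Prob(v\in M)\le\frac14(1+\eta_\ell)$ from the resampling analysis gives an outcome with $d(G[S_0\setminus M])>4\ell$.
\item Mader's theorem then gives an $\ell$-connected $H\subseteq G[S_0\setminus M]$, and one takes $S=V(H)$.
\item Every vertex of $S_0\setminus M$ already has $\ell$ neighbours in $T_0\cup M$. So moving the rest into $T$ keeps $G[T]$ $\ell$-connected, and $d_T(v)\ge\ell$ on $S$ holds automatically.
\end{itemize}
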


The order of magnitude in Theorem~\ref{thm:main} is best possible. Indeed, $K_{2\ell+1}$ is $2\ell$-connected, while its vertex set cannot be partitioned into two sets inducing $\ell$-connected graphs.

By applying Theorem \ref{thm:main} iteratively \(t-1\) times to \(G[S]\), we obtain the following corollary. The proof is therefore omitted.

\begin{corollary}\label{cor1}
For any two positive integers $t,\ell$, every $641^{t-1}\ell$-connected graph $G$ has a $t$-partition $V(G)=V_1\cup V_2\cup \cdots \cup V_t$ such that  $G[V_i]$ is $\ell$-connected for each $i\in [t]$ and $d_{V_t}(v)\ge \ell$ for every $v\in V(G)\setminus V_t $.
\end{corollary}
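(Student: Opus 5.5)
The statement immediately above is Corollary~\ref{cor1}, and I will prove it by induction on $t$ using Theorem~\ref{thm:main}. The idea is to peel off $V_t$ first and then recursively split the part that must receive the remaining $t-1$ classes.

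For $t=1$, set $V_1=V(G)$. Then $G$ is $\ell$-connected, and the degree condition is vacuous because $V(G)\setminus V_1=\emptyset$.

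For $t=2$, the statement is exactly Theorem~\ref{thm:main}: take $V_1=S$ and $V_2=T$.

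For the inductive step with $t\ge 3$, let $G$ be $641^{t-1}\ell$-connected and put $\ell'=641^{t-2}\ell$, so that $G$ is $641\ell'$-connected.

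\begin{enumerate}[label=(\roman*)]
\item Apply Theorem~\ref{thm:main} with parameter $\ell'$. This gives a partition $V(G)=S\cup T$ in which $G[S]$ and $G[T]$ are $\ell'$-connected and $d_T(v)\ge \ell'\ge \ell$ for every $v\in S$.
\item Set $V_t=T$. Since $\ell'\ge\ell$, the graph $G[V_t]$ is $\ell$-connected.
\item The graph $G[S]$ is $641^{(t-1)-1}\ell$-connected. Apply the induction hypothesis for $t-1$ to $G[S]$. This gives $S=V_1\cup\cdots\cup V_{t-1}$ with every $G[V_i]$ $\ell$-connected.
\item The induction also yields a degree condition inside $S$ toward $V_{t-1}$, but we do not need it. We only use that every $v\in V(G)\setminus V_t=S$ satisfies $d_{V_t}(v)=d_T(v)\ge\ell$, which holds by (i).
\end{enumerate}

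This completes the induction.

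There is no real obstacle here. The only care needed is in two places. First, the connectivity parameter fed to Theorem~\ref{thm:main} is scaled by $641^{t-2}$, so that $S$ retains enough connectivity for the recursion. Second, the class receiving the degree condition must be the $T$ produced at the first, outermost step, not a class produced deeper in the recursion. The paper's phrase ``iteratively to $G[S]$'' matches this order.

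The induced subgraphs used in the recursion are taken inside $G[S]$, and $G[S][V_i]=G[V_i]$, so the connectivity of each $G[V_i]$ transfers to $G$ unchanged.
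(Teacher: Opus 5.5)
Your proof is correct and is the argument the paper has in mind when it says the corollary follows by applying Theorem~\ref{thm:main} iteratively $t-1$ times to $G[S]$. At each stage the connectivity parameter is $641$ times smaller, and the $T$ from the first application serves as $V_t$, which gives $d_{V_t}(v)\ge\ell$ for every $v\notin V_t$. Your induction on $t$ is a formal write-up of that iteration.
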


Theorem \ref{thm:main} can be applied to establish the existence of non-separating structures in highly connected graphs. The property that the non-separating requirement preserves a c-connected subgraph after removing a prescribed subdivision is also a classical topic. A result in this direction is a theorem of Thomassen \cite{Thomassen1981}, which asserts that every $(c+3)$-connected graph $G$ contains an induced cycle $C$ for which $G-V(C)$ is $c$-connected. Combining Theorem \ref{thm:main} with a result of Mader, which states that every graph of average degree at least $d_H$ contains a subdivision of a prescribed graph $H$ (see e.g. \cite{Diestel1997}, Theorem 3.6.1), we immediately obtain Corollary \ref{cor2} below, where the constant \(k=k(c,H)=641\max\{c,d_H\}\) in Corollary \ref{cor2} is better than the corresponding constant \(k=k(c,H)=2^{16}\max\{c,d_H\}^2\) in Corollary 3 of {K\"uhn} and Osthus \cite{KuhnOsthus2003}.

\begin{corollary}\label{cor2}
For every $c\in\mathbb{N}$ and every graph $H$, there exists $k=k(c,H)\in\mathbb{N}$ such that every $k$-connected graph $G$ contains a subdivision $T_H$ of $H$ for which $G-V(T_H)$ is $c$-connected.
\end{corollary}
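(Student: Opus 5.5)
Before continuing, note that the last displayed statement is Corollary~\ref{cor2}. The paper says it follows "immediately" from Theorem~\ref{thm:main} together with Mader's theorem, so the plan below spells out that derivation.

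Set $d=\max\{c,d_H\}$ and $k=641d$, and let $G$ be $k$-connected. Apply Theorem~\ref{thm:main} with $\ell=d$. This gives a partition $V(G)=S\cup T$ such that:
\begin{itemize}
\item $G[S]$ and $G[T]$ are both $d$-connected;
\item every vertex of $S$ has at least $d$ neighbours in $T$.
\end{itemize}
Only the first property is needed here.

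\begin{enumerate}
\item \textbf{Find $T_H$ inside $S$.} Since $G[S]$ is $d$-connected, $\delta(G[S])\ge d\ge d_H$, so its average degree is at least $d_H$. Mader's theorem then gives a subdivision $T_H$ of $H$ with $V(T_H)\subseteq S$. If one worries whether $d$-connectivity requires $|S|>d$, this is immaterial: Mader's theorem needs only the degree bound.
\item \textbf{Check the complement.} We have $G-V(T_H)\supseteq G[T]$, and $G[T]$ is $d$-connected, hence $c$-connected.
\item \textbf{Absorb the leftover vertices.} The remaining set $R=S\setminus V(T_H)$ may be nonempty, so $G-V(T_H)$ is $G[T]$ with the vertices of $R$ added.
\end{enumerate}

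Step~3 is the only real obstacle. Every $v\in R$ has $d_T(v)\ge d\ge c$, and adding a vertex with at least $c$ neighbours in a $c$-connected graph preserves $c$-connectivity. I would run this as an induction, adding the vertices of $R$ one at a time. The tool is the standard expansion lemma: a separator of size less than $c$ cannot split the new vertex from the old graph, because the old graph stays connected after deleting fewer than $c$ vertices, and the new vertex keeps a neighbour in it. The order of adding does not matter, since each vertex's $c$ neighbours already lie in $T$.

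Hence $G-V(T_H)$ is $c$-connected, with $k(c,H)=641\max\{c,d_H\}$ as claimed.
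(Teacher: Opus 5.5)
Your proposal is correct and is the derivation the paper leaves implicit: apply Theorem~\ref{thm:main} with $\ell=\max\{c,d_H\}$, find $T_H$ inside $G[S]$ via Mader's theorem, and absorb $S\setminus V(T_H)$ into $G[T]$ one vertex at a time by Lemma~\ref{lem:addvertex}. One small inconsistency: your remark ``only the first property is needed'' is inaccurate, since your Step~3 uses the second property $d_T(v)\ge d$ for $v\in S$.
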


Similarly, combining Theorem \ref{thm:main} with a result from \cite{KuhnOsthus} stating that for every $s\in\mathbb{N}$ and every graph $H$ such that every $K_{s,s}$-free graph $G$ with average degree at least $d(s,H)$ contains an induced subdivision of $H$, we obtain Corollary \ref{cor3}, where the constant \(k=k(c,s,H)=641\max\{c,d(s,H)\}\) in Corollary \ref{cor3} is better than the corresponding constant $ k=k(c,s,H)=2^{16}\max\{c,d(s,H)\}^2$ in Corollary 4 of {K\"uhn} and Osthus \cite{KuhnOsthus2003}.

\begin{corollary}\label{cor3}
For all $c,s\in\mathbb{N}$ and every graph $H$, there exists $k=k(c,s,H)\in\mathbb{N}$ such that every $k$-connected $K_{s,s}$-free graph $G$ contains an induced subdivision $T_H$ of $H$ for which $G-V(T_H)$ is $c$-connected.
\end{corollary}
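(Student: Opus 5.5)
The plan is to apply Theorem~\ref{thm:main} with a suitably large $\ell$, find the induced subdivision inside $G[S]$, and then check that deleting it leaves a $c$-connected graph built around $G[T]$. Set $\ell:=\max\{c,d(s,H)\}$ and $k:=641\ell$, where $d(s,H)$ is the constant from the result of \cite{KuhnOsthus} quoted before the statement. Let $G$ be a $k$-connected $K_{s,s}$-free graph. By Theorem~\ref{thm:main}, $V(G)$ has a partition $S\cup T$ such that $G[S]$ and $G[T]$ are $\ell$-connected and $d_T(v)\ge \ell$ for every $v\in S$.

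\textbf{Step 1: find $T_H$ inside $G[S]$.} Since $G[S]$ is $\ell$-connected, $\delta(G[S])\ge \ell\ge d(s,H)$, so its average degree is at least $d(s,H)$. Whether $K_{s,s}$-freeness is meant as subgraph-free or induced-subgraph-free, it is inherited by $G[S]$, because $G[S]$ is an induced subgraph of $G$. The result of \cite{KuhnOsthus} therefore gives an induced subdivision $T_H$ of $H$ in $G[S]$. Since $G[S]$ is induced in $G$, $T_H$ is also an induced subdivision of $H$ in $G$.

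\textbf{Step 2: connectivity of $G-V(T_H)$.} Write $G':=G-V(T_H)$. Then $V(G')=T\cup S'$ with $S'=S\setminus V(T_H)$, and $G'$ contains $G[T]$, which is $\ell$-connected and hence $c$-connected. It has more than $\ell\ge c$ vertices, so $|V(G')|>c$. Every $v\in S'$ has at least $\ell\ge c$ neighbours in $T$.

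We use the standard fact that adding to a $c$-connected graph a new vertex with at least $c$ neighbours in it yields a $c$-connected graph. We apply it in the following uniform form. Let $X\subseteq V(G')$ with $|X|<c$. Then $G[T]-X$ is connected and nonempty, because $G[T]$ is $c$-connected. Each $v\in S'\setminus X$ keeps at least one neighbour in $T\setminus X$, since $d_T(v)\ge c>|X|$. Hence every vertex of $G'-X$ lies in the component containing $T\setminus X$, so $G'-X$ is connected. It follows that $G'$ is $c$-connected.

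\textbf{Constant and main obstacle.} This gives the corollary with $k(c,s,H)=641\max\{c,d(s,H)\}$. There is no real obstacle: all the difficulty sits in Theorem~\ref{thm:main}. The only points needing care are that inducedness and $K_{s,s}$-freeness pass to $G[S]$, and that every vertex of $S'$ attaches to $T$ by at least $c$ edges. The cross-degree condition $d_T(v)\ge\ell$ is exactly what provides the latter.
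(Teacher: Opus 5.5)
Your proposal is correct and follows the argument the paper intends. You take $\ell=\max\{c,d(s,H)\}$, apply Theorem~\ref{thm:main}, and find the induced subdivision inside the $K_{s,s}$-free induced subgraph $G[S]$ via the K\"uhn--Osthus result. You then use $d_T(v)\ge\ell\ge c$, as in Lemma~\ref{lem:addvertex}, to get $G-V(T_H)$ $c$-connected, which yields the paper's constant $k=641\max\{c,d(s,H)\}$; the paper states this combination in a single sentence, and you supply the routine details.
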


The rest of the paper is {organised} as follows. Section 2 introduces the notation and preliminary lemmas used throughout the paper. Section 3 presents the proof of Theorem \ref{thm:main}. Finally, we conclude the paper with some remarks in Section 4.

\section{Preliminaries}
All graphs are finite and simple. For a graph $G$, we write $V(G)$ and $E(G)$ for its \textbf{vertex set} and \textbf{edge set}, respectively, and $e(G)=|E(G)|$. For $X\subseteq V(G)$, let $G[X]$ denote the subgraph induced by $X$. For $v\in V(G)$ and $X\subseteq V(G)$, put $d_X(v)=|N_G(v)\cap X|$. The \textbf{minimum degree} of $G$ is defined as $\delta(G)=\min\{d_G(v)\mid v\in V(G)\}$, and the \textbf{maximum degree} as $\Delta(G)=\max\{d_G(v)\mid v\in V(G)\}$. A graph is \textbf{$k$-connected} if it has at least $k+1$ vertices and remains connected after the deletion of fewer than $k$ vertices. We write $\kappa(G)$ for the vertex-connectivity of $G$. {We use $\mathbb N=\{1,2,\ldots\}$ and $[t]=\{1,\ldots,t\}$ for every $t\in\mathbb N$.}

Given a graph $G$ and two vertices $x,y\in V(G)$, we denote the edge joining $x$ and $y$ as $xy$. For a vertex $x$ of $G$, we define $N_G(x)=\{y\mid xy\in E(G)\}$ as the \textbf{{neighbourhood}} of $x$, and $d_G(x)=|N_G(x)|$ as the \textbf{degree} of $x$.
Let $d(G)=\frac{2e(G)}{|V(G)|}$ denote the \textbf{average degree} of $G$
and let $\mad(G)=\max_{\varnothing\ne J\subseteq G} d(J)$.  A graph $G$ is $d$-degenerate if every nonempty subgraph of $G$ contains a vertex of degree at most $d$. If $\mad(G)\le d$, where $d$ is a nonnegative integer, then we can deduce that $G$ is {$d$-degenerate}. Thus $V(G)$ has an ordering $v_1,\ldots,v_n$ such that every $v_i$ has at most $d$ neighbours in $\{v_{i+1},\ldots,v_n\}$.

We shall use the following lemma of K\"uhn and Osthus~\cite{KuhnOsthus2003}.

\begin{lemma}\label{lem:KO}\label{KuhnOsthus2003}
Let $a$ be a positive integer. Every $9a$-connected graph $G$ has a partition $ V(G)=S\cup T $ such that
\begin{enumerate}[label=\textup{(\roman*)}]
\item $G[T]$ is $a$-connected;
\item $d(G[S])\ge a$;
\item $d(J)<32a$ for every subgraph $\emptyset \neq J\subseteq G[S]$.
\end{enumerate}
\end{lemma}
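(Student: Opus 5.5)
We would build the partition in two stages: first an $a$-connected set $T_0$ that is maximal, and then a sparse dense core $S\subseteq V(G)\setminus T_0$, with everything else pushed back into $T$.

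\emph{Stage 1.} Since $G$ is $9a$-connected, $\delta(G)\ge 9a$, and by Mader's theorem $G$ contains an $a$-connected subgraph (average degree $4a$ suffices). Among all $T_0\subseteq V(G)$ with $G[T_0]$ $a$-connected, choose one of maximum size and put $S_0=V(G)\setminus T_0$. We use the standard fact that adding a vertex with at least $a$ neighbours in an $a$-connected graph keeps it $a$-connected. By maximality, every $v\in S_0$ therefore has $d_{T_0}(v)<a$, and so $d_{S_0}(v)\ge 8a$. In particular $\delta(G[S_0])\ge 8a$. If $S_0=\emptyset$ we instead pick a suitable small piece of $G$ as $S$ and check (i)--(iii) directly; this degenerate case should be routine.

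\emph{Stage 2.} Inside $G[S_0]$, choose $S\subseteq S_0$ that is extremal for a potential such as $\phi(X)=e(G[X])-c\,a|X|$ with a suitable constant $c$. For instance, take $S$ minimal with $e(G[S])\ge \frac a2|S|$, with the ties or secondary choices arranged so that $e(G[J])<16a|J|$ for every $J\subseteq S$. Minimality gives $d(G[S])\ge a$, which is (ii), and all proper subsets are sparse. The density of $S$ itself must be controlled by an exchange argument: if some $J\subseteq G[S]$ had $d(J)\ge 32a$, then Mader's theorem would give an $a$-connected subgraph $K\subseteq J$. Using the $9a$-connectivity of $G$ to join $K$ to $T_0$ by $a$ disjoint paths, we would try to enlarge $T_0$, contradicting its maximality. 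Making this precise, possibly by choosing $T_0$ lexicographically or by including the paths in the potential, is where the constant $32$ should come from.

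\emph{Stage 3.} Set $T=V(G)\setminus S$, that is, $T_0$ together with $S_0\setminus S$. To show $G[T]$ is $a$-connected, we reinsert the vertices of $S_0\setminus S$ into $T_0$ one by one, each having at least $a$ neighbours in the current set. We order $S_0\setminus S$ in reverse degeneracy order: by the sparsity from Stage~2, every subset of $S_0\setminus S$ has a vertex with few (less than $8a-a$) neighbours inside it. Since each such vertex has degree at least $9a$ and fewer than $a$ neighbours in $S$ (to be arranged in the choice of $S$), peeling always leaves a vertex with at least $a$ neighbours in $T_0$ together with the already reinserted vertices.

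The main obstacle is Stage~2. We must reconcile three demands on $S$ at once: average degree at least $a$, no subgraph of average degree $32a$, and few edges from $S$ to the rest, which Stage~3 needs. The extremal choice of the potential is exactly what has to deliver all three.
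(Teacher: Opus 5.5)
\textbf{There is a genuine gap: your Stage~1 is vacuous, and the step that carries the lemma is left open.} A $9a$-connected graph is $a$-connected, so the largest $T_0$ with $G[T_0]$ $a$-connected is always $T_0=V(G)$, and $S_0=\emptyset$ in every instance. The case you set aside as degenerate is therefore the entire lemma.

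\begin{itemize}
\item \emph{The degenerate case is not routine.} Taking $|S|\le 8a$ would give (i) and (iii) for free. But for $a\ge 2$, condition (ii) forces $G[S]$ to contain a cycle. There are $9a$-connected graphs of girth greater than $8a$: apply Mader's theorem to a graph of large girth and average degree at least $36a$. So in general $S$ must be large, and then nothing you have said keeps $G-S$ $a$-connected.
\item \emph{Stages~1 and~3 contradict each other.} If $T_0$ is maximal with $G[T_0]$ $a$-connected, no strictly larger set induces an $a$-connected graph, so (i) forces $S=S_0$. Concretely, the reinsertion in Stage~3 cannot start: you showed that every vertex of $S_0$ has fewer than $a$ neighbours in $T_0$.
\item \emph{Further unjustified claims.} The back-degree bound $<7a$ does not follow from $\mad<32a$. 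The requirement that vertices of $S_0\setminus S$ have fewer than $a$ neighbours in $S$ is never arranged.
\end{itemize}

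\textbf{Stage~2 aims at the wrong difficulty.} Conditions (ii) and (iii) on their own are easy. Let $S$ be an inclusion-minimal nonempty set with $e(G[S])\ge\frac a2|S|$. Summing $e(G[S\setminus\{v\}])<\frac a2(|S|-1)$ over $v\in S$ gives $(|S|-2)\,e(G[S])<\frac a2|S|(|S|-1)$. Hence $\mad(G[S])<2a$ for $|S|\ge 3$, and the case $|S|=2$ is trivial. So there is no ``density of $S$ itself'' left to control by an exchange.

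All the content of the lemma lies in (i): choosing $S$ and $T$ jointly so that $G-S$ stays $a$-connected. This is exactly where $9a$-connectivity has to be used. Suppose some $Z$ with $|Z|<a$ separates $G-S$, and let $C$ be a component of $G-S-Z$. The set $N_G(C)$ of outside vertices adjacent to $C$ lies in $S\cup Z$ and separates $G$, so $|N_G(C)\cap S|>8a$. A correct extremal choice must be able to exploit such a configuration, and your proposal has no such step.

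Your suggested exchange does not supply it. Adding $K$ together with $a$ disjoint $T_0$--$K$ paths to $T_0$ does not give an $a$-connected induced subgraph, since inner path vertices may have only two neighbours in the new set. The maximality it would contradict is vacuous anyway.

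The paper does not prove this lemma; it quotes it from K\"uhn and Osthus. So your argument has to stand on its own, and as it stands the key step is missing, as your last paragraph essentially concedes.
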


We also use the following theorem of Mader~\cite{Mader1972}.

\begin{theorem}\label{thm:Mader}\cite{Mader1972}
Let $k$ be a positive integer. Every graph of average degree at least $4k$ contains a $k$-connected subgraph.
\end{theorem}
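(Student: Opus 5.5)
The plan is to follow the classical minimal-counterexample argument for Mader's theorem, as presented in Diestel's book. Write $\varepsilon(H)=e(H)/|V(H)|=d(H)/2$ and put $\gamma=\varepsilon(G)\ge 2k$. Since $d(G)\ge 4k$, some vertex has degree at least $4k$, so $|V(G)|\ge 4k+1$. The key move is to strengthen the inductive condition so that minimality forces both large minimum degree and high connectivity. We call a subgraph $G'\subseteq G$ \emph{admissible} if
\[ |V(G')|\ge 2k \quad\text{and}\quad e(G')>\gamma\,(|V(G')|-k). \]
The graph $G$ itself is admissible, because $e(G)=\gamma|V(G)|>\gamma(|V(G)|-k)$. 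Let $H$ be an admissible subgraph with $|V(H)|$ minimal. We will show that $H$ is $(k+1)$-connected, which is more than the statement requires.

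First, $|V(H)|>2k$. Indeed, if $|V(H)|=2k$, then $e(H)>\gamma k\ge 2k^2>\binom{2k}{2}$, which is impossible.

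Second, $\delta(H)>\gamma$. Take any $v\in V(H)$. Then $H-v$ has at least $2k$ vertices, so by minimality it is not admissible, which means $e(H-v)\le\gamma(|V(H)|-1-k)$. Subtracting this from the admissibility inequality for $H$ gives $d_H(v)>\gamma\ge 2k$. In particular $|V(H)|>2k+1\ge k+1$.

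For connectivity, suppose $H$ has a separation $H=H_1\cup H_2$ with $|V(H_1)\cap V(H_2)|\le k$ and both $V(H_i)\setminus V(H_2\text{ or }H_1)$ nonempty. Any vertex of $H_1$ outside $H_2$ has all its neighbours in $H_1$, so $|V(H_1)|\ge\delta(H)+1>2k$, and likewise $|V(H_2)|>2k$. Both $H_i$ are proper subgraphs with fewer vertices than $H$, so by minimality neither is admissible, giving $e(H_i)\le\gamma(|V(H_i)|-k)$ for $i=1,2$. Every edge of $H$ lies in $H_1$ or $H_2$, so
\[ e(H)\le e(H_1)+e(H_2)\le\gamma\bigl(|V(H_1)|+|V(H_2)|-2k\bigr)\le\gamma(|V(H)|-k), \]
using $|V(H_1)|+|V(H_2)|\le|V(H)|+k$. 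This contradicts the admissibility of $H$. Hence $H$ is $(k+1)$-connected, and in particular $k$-connected.

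The only delicate point is choosing the admissibility threshold (the "$-k$" slack together with $|V(G')|\ge 2k$) so that three things hold at once: $G$ itself qualifies, small graphs are excluded by counting edges, and the two sides of a separation each have more than $2k$ vertices so that minimality applies to them. Once those checks are in place, the argument is routine.
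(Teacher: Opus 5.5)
Your proof is correct and complete. The paper itself gives no proof of this statement: it quotes it from Mader (1972) and uses it as a black box, once, on $G[S_1]$ with $d(G[S_1])>4\ell$, to extract an $\ell$-connected subgraph.

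What you have written is the standard minimal-admissible-subgraph argument (Diestel, Theorem 1.4.3), and every check goes through:
\begin{itemize}
\item $G$ is admissible, since $|V(G)|\ge 4k+1$ and $\gamma k>0$.
\item Order exactly $2k$ is excluded, because $\gamma k\ge 2k^2>2k^2-k=\binom{2k}{2}$.
\item Minimality applied to $H-v$ gives $d_H(v)=e(H)-e(H-v)>\gamma\ge 2k$, hence $|V(H)|\ge 2k+2$.
\item In a separation of order at most $k$, each side contains a vertex all of whose more than $2k$ neighbours lie on that side. So minimality applies to both sides and forces $e(H)\le\gamma(|V(H)|-k)$, a contradiction.
\end{itemize}

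Compared with the bare citation, your version makes the paper self-contained. It also yields slightly more than stated, namely a $(k+1)$-connected subgraph, although only $k$-connectivity is needed in the application.
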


We shall use the standard Chernoff bound.
\begin{lemma}\label{chernoff}
Suppose $a>0$ and $X_1,\dots,X_m$ are independent identically distributed random variables with $\mathbb{P}(X_i=1)=p$ and $\mathbb{P}(X_i=0)=1-p$. Let $\mu = pm$. Then $\mathbb{P}\left(\sum_{i=1}^m X_i < \mu - a\right) < e^{-a^2/(2\mu)}${.}
\end{lemma}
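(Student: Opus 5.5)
The statement to be proved is the Chernoff lower-tail bound (Lemma~\ref{chernoff}). I will use the standard exponential-moment method. Set $X=\sum_{i=1}^m X_i$ and fix $\lambda>0$. By Markov's inequality applied to $e^{-\lambda X}$,
\[
\mathbb{P}(X<\mu-a)\le \mathbb{P}\bigl(e^{-\lambda X}\ge e^{-\lambda(\mu-a)}\bigr)\le e^{\lambda(\mu-a)}\,\mathbb{E}\bigl[e^{-\lambda X}\bigr].
\]

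By independence, $\mathbb{E}[e^{-\lambda X}]=\prod_{i}(1-p+pe^{-\lambda})$. Using $1+x\le e^x$ with $x=p(e^{-\lambda}-1)$, this is at most $\exp\bigl(\mu(e^{-\lambda}-1)\bigr)$. Hence
\[
\mathbb{P}(X<\mu-a)\le \exp\bigl(\mu(e^{-\lambda}-1)+\lambda\mu-\lambda a\bigr).
\]

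The key elementary step is the inequality $e^{-\lambda}\le 1-\lambda+\lambda^2/2$ for $\lambda\ge 0$. It holds because the difference $g(\lambda)=1-\lambda+\lambda^2/2-e^{-\lambda}$ satisfies $g(0)=g'(0)=0$ and $g''(\lambda)=1-e^{-\lambda}\ge 0$. With this, the exponent is at most $\mu\lambda^2/2-\lambda a$. Choosing $\lambda=a/\mu$ gives the bound $e^{-a^2/(2\mu)}$.

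For the strict inequality, two cases arise.
\begin{itemize}
\item If $a\ge\mu$, then $\mu-a\le 0$, so the event $\{X<\mu-a\}$ is empty and has probability $0<e^{-a^2/(2\mu)}$.
\item If $0<a<\mu$, then $\lambda=a/\mu>0$, and $g''>0$ on $(0,\infty)$ makes $e^{-\lambda}<1-\lambda+\lambda^2/2$ strict. Since $\mu>0$, the exponent bound is strict, and therefore the final inequality is strict.
\end{itemize}

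Degenerate cases are handled directly. If $p=0$, then $\mu=0$ and the right-hand side is read as $0$; however, with $\mu-a<0$ the event is empty, so the statement holds trivially, and one may simply assume $\mu>0$. No step is a real obstacle; the only care needed is the Taylor-type inequality for $e^{-\lambda}$ and the strictness bookkeeping.
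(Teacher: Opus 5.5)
Your proof is correct: it is the standard exponential-moment derivation (Markov's inequality for $e^{-\lambda X}$, then $1+x\le e^{x}$, then $e^{-\lambda}\le 1-\lambda+\lambda^2/2$, then $\lambda=a/\mu$), and the paper gives no proof of its own but quotes the lemma as the standard Chernoff bound, so your argument is exactly what that citation refers to. One small slip: for $\mu=0$ you cannot read the right-hand side as $0$ and call the strict inequality trivial, since that would say $0<0$; the right remark is that the bound only makes sense for $\mu>0$, which holds in the paper's application ($m=128r$, $p=1/4$). Also, your strictness argument in the second case never uses $a<\mu$, so it already covers every $a>0$ and the case split is unnecessary.
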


One useful {identity} is stated below.

\begin{theorem}[Tonelli, see \cite{folland1999real}]
If \(Y_0,Y_1,\ldots\) are nonnegative random variables, then
\[
\mathbb E\Big[\sum_{t\ge0}Y_t\Big]
=
\sum_{t\ge0}\mathbb EY_t\text{{.}}
\]
\end{theorem}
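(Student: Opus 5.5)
We would prove this directly from the monotone convergence theorem, allowing both sides to take the value $+\infty$. For $n\ge 0$ put $S_n=\sum_{t=0}^{n}Y_t$ and $S=\sum_{t\ge0}Y_t$, where $S$ takes values in $[0,\infty]$. Since every $Y_t$ is nonnegative, the sequence $(S_n)$ is pointwise nondecreasing and converges pointwise to $S$. Each $S_n$ is a finite sum of measurable functions, hence measurable, and $S$ is measurable as their pointwise limit.

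\textbf{Step 1 (finite additivity).} For each fixed $n$, linearity of expectation gives $\mathbb E S_n=\sum_{t=0}^{n}\mathbb E Y_t$. This holds for nonnegative variables even when some $\mathbb E Y_t=+\infty$, because the additivity of the integral of nonnegative functions needs no integrability assumption. We would note this explicitly, since no integrability is assumed on the $Y_t$.

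\textbf{Step 2 (passage to the limit).} The monotone convergence theorem applied to $0\le S_0\le S_1\le\cdots\uparrow S$ gives $\mathbb E S=\lim_{n\to\infty}\mathbb E S_n$. Combining this with Step 1 yields $\mathbb E S=\lim_{n}\sum_{t=0}^{n}\mathbb E Y_t=\sum_{t\ge0}\mathbb E Y_t$. The last equality is the definition of a series of nonnegative terms in $[0,\infty]$.

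Alternatively, one can view $(t,\omega)\mapsto Y_t(\omega)$ as a nonnegative measurable function on $\mathbb N_0\times\Omega$ equipped with counting measure times $\mathbb P$. The identity is then exactly Tonelli's theorem on interchanging the two iterated integrals; counting measure is $\sigma$-finite, so the theorem applies.

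There is no real obstacle here. The only point needing care is Step 2: one must use monotonicity, which is where nonnegativity enters, rather than dominated convergence, since no integrable dominating function is available. In the later application, the identity will presumably be used with $Y_t$ the indicator that a resampling occurs at step $t$ of the Moser--Tardos algorithm. It then bounds the expected total number of resamplings by a sum of per-step probabilities, which is finite, so the algorithm terminates almost surely.
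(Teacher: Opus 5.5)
Your argument is correct and complete. The paper gives no proof of its own and simply cites Folland, and your route (finite additivity of the integral on nonnegative functions, then monotone convergence on the partial sums, or equivalently Tonelli for counting measure times $\Prob$) is the standard textbook proof.

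One small correction to your closing aside: the paper does not use the identity to show termination, which it takes from Theorem~\ref{moser}. Instead it applies the identity with $Y_t=I_t(v)\xi_{v,R_v(t)+1}$ and with $Y_t=I_t(v)$ to obtain $\Prob(X_v^*=1)\le\frac14\bigl(1+\Ex R_v\bigr)$.
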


We apply the Moser--Tardos theorem in place of the classical Lov\'asz Local Lemma, since the resampling process provides direct control of the marginal probabilities of the final assignment.

\begin{theorem}[Moser--Tardos \cite{MoserTardos2010}]\label{moser}
Let $X_1,\dots,X_n$ be independent random variables.
Let $\mathcal A=\{A_1,\dots,A_m\}$ be a collection of bad events.
For each $A_i$, let  $D_i$ be the set of events $A_j\neq A_i$ sharing at least one variable with $A_i$.
If there exist real numbers $0<x_i<1$ such that
\begin{equation}\label{eq1}
  \mathbb P(A_i)\le x_i\prod_{A_j\in D_i}(1-x_j)\quad \text{for all }i,
\end{equation}
then the Moser--Tardos resampling algorithm terminates in expected at most $\frac{x_i}{1-x_i}$ resampling steps for every $A_i$ and produces an assignment avoiding all events $A_i$.
\end{theorem}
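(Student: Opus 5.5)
The plan is to follow the original witness-tree argument of Moser and Tardos. I fix an execution of the algorithm and record its \emph{log} $C(1),C(2),\ldots$, the sequence of events resampled. To each step $t$ I attach a \emph{witness tree} $\tau(t)$, a rooted tree whose vertices are labelled by events. It is built by scanning the log backwards from $C(t)$, which becomes the root. For each earlier step $s<t$, if $C(s)$ shares a variable with, or equals, the label of some current vertex, I attach a new vertex labelled $C(s)$ as a child of the deepest such vertex. Two structural facts are routine to check. First, $\tau(t)$ determines $t$ uniquely, so distinct steps give distinct witness trees. Second, the children of any vertex carry distinct labels from $\{A_j\}\cup D_j$, and the labels at equal depth share no variable.

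The key probabilistic step is the \emph{coupling} (resampling-table) lemma. I pre-sample, independently, an infinite table of values $X_k^{(0)},X_k^{(1)},\ldots$ for each variable, and let the algorithm take the next unused entry in column $k$ whenever $X_k$ is resampled. For a fixed proper witness tree $\tau$, I process its vertices in reverse BFS order (deepest first). For each vertex I read, for every variable of its label, the table entry determined by how many earlier-processed vertices involve that variable. If $\tau$ occurs in the log, then each vertex's event must hold on these entries, because that event was true when it was resampled. Distinct vertices use disjoint sets of table entries, so
\[
\mathbb P(\tau \text{ occurs in the log})\le \prod_{v\in V(\tau)}\mathbb P\bigl(A_{[v]}\bigr),
\]
where $A_{[v]}$ is the label of $v$. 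Getting the indexing of table entries right, so that the independence is genuine, is the step I expect to be the main obstacle.

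Next I bound the sum of these products over all proper witness trees rooted at $A_i$, using a Galton--Watson process. The root is labelled $A_i$. Each vertex labelled $A_j$ independently includes each $A_k\in\{A_j\}\cup D_j$ as a child with probability $x_k$. A direct computation shows that a particular tree $\tau$ is generated with probability
\[
\frac{1-x_i}{x_i}\prod_{v\in V(\tau)} x'_{[v]},\qquad\text{where } x'_j=x_j\prod_{A_k\in D_j}(1-x_k).
\]
Hypothesis \eqref{eq1} gives $\mathbb P(A_j)\le x'_j$. Combining this with the coupling lemma, the sum over trees of the probabilities is at most $\frac{x_i}{1-x_i}$ times the total probability that the process produces a finite tree, hence at most $\frac{x_i}{1-x_i}$.

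Finally, the number of resamplings of $A_i$ equals the number of distinct witness trees with root $A_i$ occurring in the log. By linearity of expectation, summing indicators via the Tonelli identity, its expectation is at most $\frac{x_i}{1-x_i}$. Summing over $i$ shows that the expected total number of steps is finite, so the algorithm terminates almost surely. It stops only when no $A_i$ holds, so its output avoids every $A_i$.
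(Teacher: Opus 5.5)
Your proposal is correct. It is exactly the Moser--Tardos witness-tree argument (log, proper witness trees, resampling-table coupling, and the Galton--Watson comparison with $p_\tau=\frac{1-x_i}{x_i}\prod_{v}x'_{[v]}$), which the paper quotes as Theorem~\ref{moser} from the cited source without reproducing a proof. For the indexing step you flagged, the right entry for variable $P$ at vertex $v$ is the number of vertices of strictly greater depth whose label involves $P$. This equals the number of resamplings of $P$ before the step that created $v$: every earlier event involving $P$ is attached below $v$'s level, and any deeper vertex involving $P$ must come from an earlier step, since otherwise $v$ would have been attached below it. It also coincides with your ``earlier-processed'' count, because labels at the same depth are variable-disjoint.
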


The following elementary observation is also needed.

\begin{lemma}\label{lem:addvertex}
Let $H$ be an $r$-connected graph and let $v\notin V(H)$. If $v$ has at least $r$ neighbours in $H$, then the graph obtained from $H$ by adding $v$ and its incident edges to $H$ is $r$-connected.
\end{lemma}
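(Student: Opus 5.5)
The plan is to check the definition of $r$-connectivity directly on the graph $H'$ obtained from $H$ by adding $v$ together with its edges to $N_H(v)$. No earlier results are needed beyond the definition of $r$-connectivity given in the preliminaries.

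\emph{Order condition.} Since $H$ is $r$-connected, $|V(H)|\ge r+1$. Hence $|V(H')|=|V(H)|+1\ge r+2\ge r+1$.

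\emph{Connectivity condition.} Let $X\subseteq V(H')$ with $|X|\le r-1$; we must show that $H'-X$ is connected. We split into two cases according to whether $v\in X$.
\begin{enumerate}[label=\textup{(\alph*)}]
\item Suppose $v\in X$. Then $H'-X=H-(X\setminus\{v\})$. Since $|X\setminus\{v\}|\le r-2<r$ and $H$ is $r$-connected, this graph is connected.
\item Suppose $v\notin X$. Then $X\subseteq V(H)$ and $|X|<r$, so $H-X$ is connected. It is also nonempty, because $|V(H)|\ge r+1>|X|$. Moreover, $v$ has at least $r$ neighbours in $H$, and at most $|X|\le r-1$ of them lie in $X$. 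So $v$ keeps at least one neighbour in $H-X$. Therefore $H'-X$, which is $H-X$ together with $v$ attached to that neighbour, is connected.
\end{enumerate}
In both cases $H'-X$ is connected, so $H'$ is $r$-connected.

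There is no real obstacle here. The only points needing care are the order requirement $|V(H')|\ge r+1$ in the definition and the fact that $H-X$ is nonempty in case (b), which is what guarantees that $v$ attaches to something. Both follow from $|V(H)|\ge r+1$.
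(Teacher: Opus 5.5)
Your proposal is correct and follows the same argument as the paper: the case split on whether the deleted set contains $v$, using the $r$-connectivity of $H$ in both cases and the fact that fewer than $r$ deletions cannot remove all of $v$'s neighbours. You also explicitly check the order condition $|V(H')|\ge r+1$ and the nonemptiness of $H-X$, which the paper leaves implicit.
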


\begin{proof}
Let $Z$ be a set of fewer than $r$ vertices. If $v\in Z$, then $H-(Z\setminus\{v\})$ is connected. Suppose that $v\notin Z$. Then $H-Z$ is connected, and at least one neighbour of $v$ in $H$ does not belong to $Z$. Hence the graph obtained after deleting $Z$ is connected.
\end{proof}

\section{Proof of Theorem \ref{thm:main}}

This section contains the proof of Theorem \ref{thm:main}. A brief outline of the proof is presented first. Suppose $G$ is a $641\ell$-connected graph. By Lemma \ref{lem:KO}, there exists an initial partition $V(G)=S_0\cup T_0$ such that $G[T_0]$ is highly connected, $G[S_0]$ has large average degree, and every subgraph of $G[S_0]$ has bounded average degree. Hence $G[S_0]$ admits a degeneracy ordering. Every vertex $v\in S_0$ with fewer than $\ell$ {neighbours} in $T_0$ has many {neighbours} preceding $v$ in this ordering. We select vertices of $S_0$ independently with probability $1/4$. By the Moser--Tardos algorithm, there exists a vertex selection such that every vertex $v\in S_0$ with $d_{T_0}(v)<\ell$ has at least $\ell$ selected preceding {neighbours}. Moreover, the Moser--Tardos algorithm yields a subset $M\subseteq S_0$ satisfying $d(G[S_0\setminus M])>4\ell$. We then add the vertices in $M$ to $T_0$ in degeneracy order to verify that $G[T_0\cup M]$ is $\ell$-connected. Theorem \ref{thm:Mader} guarantees an $\ell$-connected subgraph ${H}$ of $G[S_0\setminus M]$. {Let $S=V(H)$.} Finally, we add the vertices in $S_0\setminus (M\cup S)$ to $T_0\cup M$ to obtain the desired set $T$, which completes the proof.

To {prove} Theorem \ref{thm:main}, we establish the following vertex redistribution lemma.

\begin{lemma}\label{lem:transfer}
Let $r$ be a positive integer, and let $G$ be a graph with $\delta(G)\ge641r${.}
Suppose that $V(G)=S\cup T$ satisfying $G[T]$ is $r$-connected,  $d(G[S])\ge16r$, and  $\mad(G[S])\le512r$. Then there is a partition $V(G)=S'\cup T'$
with $S'\subseteq S$ and $T\subseteq T'$ such that
\begin{enumerate}[label=\textup{(\roman*)}]
\item $G[T']$ is $r$-connected;
\item $d_{T'}(v)\ge r$ for every $v\in S'$;
\item $ d(G[S'])>4r$.
\end{enumerate}
\end{lemma}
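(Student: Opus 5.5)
We first use the bound $\mad(G[S])\le 512r$ to order $S$, then select a random subset $M\subseteq S$ with the Moser--Tardos algorithm, and finally set $T'=T\cup M$ and $S'=S\setminus M$.

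\textbf{Ordering and bad events.} Since $\mad(G[S])\le 512r$, we fix a degeneracy ordering $v_1,\dots,v_n$ of $S$ in which every vertex has at most $512r$ neighbours in $S$ that come after it. Call $v\in S$ \emph{poor} if $d_T(v)<r$. For a poor $v$ we have $d_S(v)\ge 641r-r=640r$, so $v$ has at least $640r-512r=128r$ neighbours in $S$ preceding it. For each poor $v$ fix a set $P_v$ of exactly $128r$ such preceding neighbours.

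Select every vertex of $S$ independently with probability $1/4$. Let $A_v$ be the event that fewer than $r$ vertices of $P_v$ are selected. The expected number of selected vertices in $P_v$ is $\mu=32r$, so Lemma~\ref{chernoff} with $a=31r$ gives $\Prob(A_v)<e^{-31^2r/64}<e^{-15r}$.

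Now bound the dependencies. Events $A_u$ and $A_v$ share a variable only if $P_u\cap P_v\neq\emptyset$. If $w\in P_u$, then $u$ is a later neighbour of $w$. So each $w$ lies in at most $512r$ sets $P_u$, and hence $|D_v|\le 128r\cdot 512r=2^{16}r^2$. Take all $x_v=x:=e\cdot e^{-15r}$. Condition~\eqref{eq1} then holds, because $e^{-15r}\le x(1-x)^{2^{16}r^2}$ reduces to $e\,e^{-15r}(2^{16}r^2+1)\le 1$. This inequality is true for all $r\ge1$: at $r=1$ it is about $0.05$, and it decreases in $r$. So Theorem~\ref{moser} applies, and the algorithm returns a selection $M$ avoiding every $A_v$. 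Each event is resampled an expected at most $x/(1-x)$ times.

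\textbf{Checking (i) and (ii).} Set $T'=T\cup M$ and $S'=S\setminus M$.

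For (ii), let $v\in S'$. If $v$ is not poor, then $d_{T'}(v)\ge d_T(v)\ge r$. If $v$ is poor, then since $A_v$ fails, $v$ has at least $r$ selected neighbours in $P_v\subseteq M\subseteq T'$.

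For (i), add the vertices of $M$ to $G[T]$ one by one in the degeneracy order. A vertex $w\in M$ that is not poor has at least $r$ neighbours in $T$. A poor $w\in M$ has at least $r$ neighbours in $P_w\cap M$, and all of these precede $w$, so they have already been added. In either case Lemma~\ref{lem:addvertex} keeps the graph $r$-connected at every step, so $G[T']$ is $r$-connected.

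\textbf{The main obstacle: (iii).} This is the step I expect to be hardest, since it needs control of the marginal distribution of the final assignment rather than of the initial one. The plan is to show $\Prob(w\in M)\le 1/4+\varepsilon$ for every $w\in S$, with $\varepsilon$ tiny.

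Let $Y_t$ be the indicator that the variable of $w$ is sampled afresh at step $t$, with $t=0$ the initial sampling. The final value of $w$'s variable is the last fresh sample, so
\[
\Prob(w\in M)\le \tfrac14\,\Ex\Big[\sum_{t\ge0}Y_t\Big].
\]
By Tonelli, $\Ex[\sum_t Y_t]=\sum_t\Ex[Y_t]$. This sum is at most $1$ plus the expected number of resamplings of events $A_u$ with $w\in P_u$. There are at most $512r$ such events, each resampled an expected at most $x/(1-x)$ times, so
\[
\Prob(w\in M)\le \tfrac14\Big(1+512r\,\tfrac{x}{1-x}\Big)=\tfrac14+\varepsilon,
\]
where $\varepsilon$ is exponentially small in $r$.

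An edge of $G[S]$ is lost only if one of its ends lies in $M$, so $\Ex[e(G[S'])]\ge(\tfrac12-2\varepsilon)\,e(G[S])$. Fix an outcome of the algorithm with $e(G[S'])\ge(\tfrac12-2\varepsilon)e(G[S])$; it still satisfies (i) and (ii), since these hold for every output. Then, using $|S'|\le|S|$,
\[
d(G[S'])\ge\frac{2(\tfrac12-2\varepsilon)e(G[S])}{|S|}=(\tfrac12-2\varepsilon)\,d(G[S])\ge(\tfrac12-2\varepsilon)16r>4r,
\]
which proves (iii). The care needed is in justifying this marginal bound rigorously: the final value of a variable is not simply a fresh $\mathrm{Bernoulli}(1/4)$, so the argument must bound the probability of the final value being $1$ by $1/4$ times the expected number of times the variable is sampled.
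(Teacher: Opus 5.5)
Your proposal is correct and follows essentially the same route as the paper. The steps match: a degeneracy ordering, sets $P_v$ of $128r$ earlier neighbours, selection with probability $1/4$, the Chernoff bound, the $2^{16}r^2$ dependency bound, Moser--Tardos with the marginal bound $\Prob(w\in M)\le\frac14(1+\Ex R_w)$, and adding $M$ in degeneracy order. The only differences are cosmetic: you take $x=e^{1-15r}$ and verify it via $x\le 1/(D+1)$ instead of $x=2q_r$, and you derive (iii) by fixing an outcome with $e(G[S'])\ge\Ex\, e(G[S'])$ and using $|S'|\le|S|$, where the paper uses the potential $\Phi(M)$.
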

\begin{proof}By~$\mad(G[S])\le512r$, the graph $G[S]$ is $512r$-degenerate. Fix an ordering $S=\{v_1,\ldots,v_m\}$ such that every $v_i$ has at most $512r$ neighbours in $\{v_{i+1},\ldots,v_m\}$. Call $v_i$ bad if $d_T(v_i)<r${.}
For every bad vertex \(v_i\), we have \(d_S(v_i)\ge 641r-(r-1)=640r+1\). By the degeneracy ordering, \(v_i\) has at most \(512r\) {neighbours} appearing after it. It follows that \(v_i\) has at least \(128r+1\) {neighbours} in \(\{v_1,\ldots,v_{i-1}\}\). For each bad vertex \(v_i\), fix a subset \(P_i\subseteq N_G(v_i)\cap\{v_1,\ldots,v_{i-1}\}\) with \(|P_i|=128r\):
\begin{equation}\label{eq:Pi}
P_i\subseteq N_G(v_i)\cap\{v_1,\ldots,v_{i-1}\},\qquad |P_i|=128r\text{{.}}
\end{equation}
Independently select each vertex of \(S\) with probability \(1/4\) to form a candidate set \(M_0\). For every bad vertex \(v_i\), define the bad event $\mathcal E_i=\big\{\,|P_i\cap M_0|<r\,\big\}${.}
Observe that \(|P_i\cap M_0|\) follows a binomial distribution with parameters \((128r,1/4)\), so its expectation equals \(32r\). By the Chernoff bound (Lemma \ref{chernoff}),
\begin{equation}\label{eq:qi}
\Prob(\mathcal E_i) \le \exp\left(-\frac12\left(\frac{31}{32}\right)^2 32r\right)
= \exp\left(-\frac{961r}{64}\right)=:q_r\text{{.}}
\end{equation}
 We next consider the dependency graph of the events \(\mathcal E_i\). A variable corresponding to \(v_j\) can appear in \(\mathcal E_i\) only if \(v_j\in P_i\). In this case \(j<i\) and \(v_iv_j\in E(G)\), so \(v_i\) is a later {neighbour} of \(v_j\). Thus every variable is contained in at most \(512r\) events. Since each event depends on \(128r\) variables, every event is adjacent to at most \(D:=65536r^2\) other events. Set \(x=2q_r\). The function \(r^2e^{-961r/64}\) is decreasing for \(r\ge1\),  combining this with \(D:=65536r^2\), we obtain
\[
Dx\le 2\cdot 65536\,e^{-961/64}<0.04\text{.}
\]
By Bernoulli's inequality, \((1-t)^k \ge 1-kt\) for all \(t\in[0,1]\) and integers \(k\ge 0\). Setting \(t:=x\) and \(k:=D\), we obtain \((1-x)^D \ge 1-Dx>0.96\), and hence $\Prob(\mathcal E_i)\le q_r<x(1-x)^D$.
Thus~\eqref{eq1} holds with $x_i=x$ for every $i$.

Fix any total order $\sigma$ on the events $\mathcal E_i$ and run the Moser--Tardos algorithm using the order $\sigma$. At each step, among all currently violated events, we select the minimal event with respect to \(\sigma\) and resample its dependent variables. For each $v\in S$, let $\xi_{v,0},\xi_{v,1},\xi_{v,2},\ldots$
be independent Bernoulli$(1/4)$ random variables assigned a value $0$ or $1$, and assume that all variables in  $\{\xi_{v,0},\xi_{v,1},\xi_{v,2},\ldots \mid v\in S\}$ are mutually independent. Initially $X_v=\xi_{v,0}$. Whenever $X_v$ is resampled for the $j$-th time, its new value is $\xi_{v,j}$. For each integer $t\ge 0$, let $\mathcal{F}_t$ denote the $\sigma$-algebra generated by the entries of the resampling table exposed prior to the $(t+1)$-th resampling round and the entire resampling history accumulated up to this step. For each event $\mathcal E_i$, let $\mathrm{vbl}(\mathcal E_i)$ denote the set of variables on which $\mathcal E_i$ depends. For $v\in S$, let $R_v(t)$ be the number of times $X_v$ has been resampled before the $(t+1)$-st step. If the $(t+1)$-st resampling step occurs, let $J_t$ denote the index of the bad event resampled at this step, and define
\[
I_t(v)=\begin{cases}
         \mathbf 1, & \mbox{if } \text{the $(t+1)$-st resampling step occurs and }v\in\operatorname{vbl}(\mathcal E_{J_t}) \\
         \mathbf 0, & \mbox{otherwise{.}}
       \end{cases}
\]
If $I_t(v)=1$, the value assigned to $X_v$ in this resampling step is $\xi_{v,R_v(t)+1}$. This random variable $\xi_{v,R_v(t)+1}$ has not been {realised} previously and is independent of $\mathcal F_t$. Hence
\begin{equation}\label{eq:conditionaltable}
\Ex\left[I_t(v)\xi_{v,R_v(t)+1}\mid\mathcal F_t\right]
=\frac14\,I_t(v)\text{{.}}
\end{equation}
By Theorem \ref{moser}, the resampling process terminates with probability 1; the expected number of times each bad event is resampled is at most $\frac{x}{1-x}<3q_r$. Let $M$ denote the set of  vertices whose final output value from the algorithm is 1.  Since no bad event occurs at termination,
\begin{equation}\label{eq:support}
|P_i\cap M|\ge r
\qquad\text{for every bad }v_i\text{{.}}
\end{equation}

For $v\in S$, let $R_v$ be the number of times the corresponding variable is resampled, and let $X_v^*$ be its value at termination. Since this variable occurs in at most $512r$ events,
\begin{equation}\label{eq:Rv}
\Ex R_v
\le512r\cdot3q_r
=1536rq_r
=:\eta_r\text{{.}}
\end{equation}

\begin{claim}\label{eq:marginalapp}
$\Prob(v\in M)=\Prob(X_v^*=1)\le \frac14(1+\eta_r)$, for every $v\in S$.
\end{claim}
\begin{proof}
The quantity $X_v^*$ denotes the value obtained from the final sampling of $X_v$. If $X_v^*=1$, then either $\xi_{v,0}=1$, or one of $\xi_{v,0},\xi_{v,1},\xi_{v,2},\ldots$ actually used in a resampling of $X_v$ is equal to $1$; hence, pointwise,
\[
X_v^*\le \xi_{v,0}+\sum_{t\ge0}I_t(v)\xi_{v,R_v(t)+1}\text{{.}}
\]
 By Tonelli's theorem and~\eqref{eq:conditionaltable},
\begin{align*}
\Prob(X_v^*=1)
&\le \Ex\left[\xi_{v,0}\right]+\sum_{t\ge0}\Ex\bigl[I_t(v)\xi_{v,R_v(t)+1}\bigr]
=\frac14+\frac14\sum_{t\ge0}\Ex I_t(v)
=\frac14\bigl(1+\Ex R_v\bigr)\text{{.}}
\end{align*}
This establishes the claim.
\end{proof}

For every edge $uv\in E(G[S])$,~Claim \ref{eq:marginalapp} gives $\Prob(u,v\notin M)\ge1-\Prob(u\in M)-\Prob(v\in M)\ge\frac{1-\eta_r}{2}$.
Therefore
\begin{equation}\label{eq:edgesremain}
\Ex e(G[S\setminus M])
\ge\frac{1-\eta_r}{2}e(G[S])\text{{.}}
\end{equation}
The fact that $d(G[S])\ge16r$ implies $e(G[S])\ge8r|S|$.
Combining this with~\eqref{eq:edgesremain}, we obtain $\Ex e(G[S\setminus M])\ge4(1-\eta_r)r|S|$.
\begin{claim}
There exists {such a set} $M$ such that $d(G[S\setminus M])>4r$.
\end{claim}
\begin{proof}
  Define $\Phi(M)=e(G[S\setminus M])-2r|S\setminus M|$. The function $re^{-961r/64}$ is decreasing for $r\ge1$, which implies $\eta_r\le1536e^{-961/64}<\frac1{2000}$ and so $2-4\eta_r>0$.
Since $|S\setminus M|\le|S|$, we have $\Ex\Phi(M)\ge(2-4\eta_r)r|S|>0$.
This means that there is an $M$ for which $\Phi(M)>0$, which implies $e(G[S\setminus M])>2r|S\setminus M|$. {Since $\Phi(M)>0$, the set $S\setminus M$ is nonempty, and hence $d(G[S\setminus M])>4r$.}
\end{proof}

Put $S'=S\setminus M$, $T'=T\cup M$. Therefore, $ d(G[S'])>4r$.
This proves~\textup{(iii)}. To show~\textup{(i)}, write $M=\{v_{i_1},\ldots,v_{i_s}\}$, where  $i_1<\cdots<i_s$.
Starting with $T$, add $v_{i_1},\ldots,v_{i_s}$ in this order. If $v_{i_j}$ is not bad, then $d_T(v_{i_j})\ge r$. If $v_{i_j}$ is bad, then~\eqref{eq:support} and~\eqref{eq:Pi} imply that $v_{i_j}$ has at least $r$ neighbours in $M$ which precede $v_{i_j}$. These vertices have already been added. Thus every vertex has at least $r$ neighbours in the set constructed before it is added. Repeated application of Lemma~\ref{lem:addvertex} shows that $G[T']$ is $r$-connected.
It remains to prove~\textup{(ii)}. Let $v_i\in S'$. If $v_i$ is not bad, then $d_{T'}(v_i)\ge d_T(v_i)\ge r$.
If $v_i$ is bad, then~\eqref{eq:support} gives $d_{T'}(v_i)\ge|P_i\cap M|\ge r$.
This completes the proof.
\end{proof}

\begin{proof} \textbf{of Theorem~\ref{thm:main}}
Let $G$ be a $641\ell$-connected graph, and set $a=16\ell${.}
Since $641\ell\ge9a$, Lemma~\ref{lem:KO} gives a partition $V(G)=S_0\cup T_0$
such that $G[T_0]$ is $16\ell$-connected, $ d(G[S_0])\ge16\ell,$
and $\mad(G[S_0])<512\ell$. Also $\delta(G)\ge\kappa(G)\ge641\ell${.}
We may therefore apply Lemma~\ref{lem:transfer} with $r=\ell$. We obtain a partition $V(G)=S_1\cup T_1$
such that $G[T_1]$ is $\ell$-connected,
\begin{equation}\label{eq:crossfinal}
d_{T_1}(v)\ge\ell
\qquad\text{for every }v\in S_1,
\end{equation}
and $d(G[S_1])>4\ell${.}
By Theorem~\ref{thm:Mader}, $G[S_1]$ contains an $\ell$-connected subgraph $H$. Let $S=V(H)$ and $T=V(G)\setminus S ${.}
Since $H$ is a spanning subgraph of $G[S]$, the graph $G[S]$ is $\ell$-connected. Now $T=T_1\cup(S_1\setminus S)${.} By~\eqref{eq:crossfinal}, every vertex of $S_1\setminus S$ has at least $\ell$ neighbours in $T_1$. Starting with $G[T_1]$ and adding the vertices of $S_1\setminus S$ one at a time, Lemma~\ref{lem:addvertex} shows that $G[T]$ is $\ell$-connected. Finally, for every $v\in S$, $d_T(v)\ge d_{T_1}(v)\ge\ell$
by~\eqref{eq:crossfinal}. This proves the theorem.
\end{proof}

\section{Remarks}

The constant $641$ in the present paper has not been {optimised} and is chosen merely for computational simplicity. It would be interesting to determine the best possible constant in Theorem \ref{thm:main}.

Furthermore, the following {problem} on connectivity partitions in digraphs remains open.
\begin{problem}\cite{KuhnOsthusTownsend2016}
Is there a function $g$ such that, for every positive integer $k$, every strongly $g(k)$-connected digraph can be partitioned into two sets inducing strongly $k$-connected digraphs?
\end{problem}

For tournaments, the corresponding problem has recently been settled with an asymptotically optimal linear bound by Girão and Letzter \cite{girao}.

\textbf{Declaration about the use of AI:} ChatGPT was used for the probabilistic calculations in the proof of Lemma 3.1 in Section 3 and to improve the grammar of several paragraphs of the text. After using this service, the authors reviewed and edited the content as needed and take full responsibility for the content of the manuscript.

\end{document}